\numberwithin{equation}{section}
\newtheoremstyle{thmlemcorr}{10pt}{10pt}{\itshape}{}{\bfseries}{.}{10pt}{{\thmname{#1}\thmnumber{ #2}\thmnote{ (#3)}}}
\newtheoremstyle{thmlemcorr*}{10pt}{10pt}{\itshape}{}{\bfseries}{.}\newline{{\thmname{#1}\thmnumber{ #2}\thmnote{ (#3)}}}
\newtheoremstyle{defi}{10pt}{10pt}{\itshape}{}{\bfseries}{.}{10pt}{{\thmname{#1}\thmnumber{ #2}\thmnote{ (#3)}}}
\newtheoremstyle{remexample}{10pt}{10pt}{}{}{\bfseries}{.}{10pt}{{\thmname{#1}\thmnumber{ #2}\thmnote{ (#3)}}}
\newtheoremstyle{ass}{10pt}{10pt}{}{}{\bfseries}{.}{10pt}{{\thmname{#1}\thmnumber{ A#2}\thmnote{ (#3)}}}
\theoremstyle{thmlemcorr}
\newtheorem{theorem}{Theorem}
\numberwithin{theorem}{section}
\newtheorem{lemma}[theorem]{Lemma}
\newtheorem{proposition}[theorem]{Proposition}
\theoremstyle{thmlemcorr*}
\newtheorem{theorem*}{Theorem}
\newtheorem{lemma*}[theorem]{Lemma}
\newtheorem{corollary*}[theorem]{Corollary}
\newtheorem{proposition*}[theorem]{Proposition}
\newtheorem{problem*}[theorem]{Problem}
\newtheorem{conjecture*}[theorem]{Conjecture}
\theoremstyle{defi}
\newtheorem{definition}[theorem]{Definition}
\theoremstyle{remexample}
\theoremstyle{ass}
\newcommand{\Ecal}{\mathcal{E}}
\newcommand{\R}{\mathbb{R}}
\newcommand{\ONE}{\mathbbm{1}}
\def\XXint#1#2#3{{\setbox0=\hbox{$#1{#2#3}{\int}$} 
\vcenter{\hbox{$#2#3$}}\kern-.5\wd0}}
\newcommand{\p}{\partial}
\renewcommand{\phi}{\varphi}
\title{A toy model for the relativistic Vlasov-Maxwell system}
\author{Jonathan Ben-Artzi}
\address{School of  Mathematics, Cardiff University, Cardiff CF24 4AG, Wales, United Kingdom.}
\email{Ben-ArtziJ@cardiff.ac.uk}
\author{Stephen Pankavich}
\address{Department of Applied Mathematics and Statistics, Colorado School of Mines,
Golden, CO USA.}
\email{pankavic@mines.edu}
\author{Junyong Zhang}
\address{School of  Mathematics, Beijing Institute of Technology, Beijing, China; Cardiff University, Cardiff CF24 4AG, Wales, United Kingdom.}
\email{zhang\_junyong@bit.edu.cn; ZhangJ107@cardiff.ac.uk}
\thanks{The authors thank C. Pallard for explaining to them some of the delicate aspects of the Division Lemma, and the anonymous referees whose comments helped improve the presentation of the paper.}
\thanks{JBA  acknowledges  support  from  an  Engineering  and  Physical  Sciences  Research  Council  Fellowship (EP/N020154/1). SP acknowledges support from the US National Science Foundation under awards DMS-1911145 and DMS-2107938. JZ  acknowledges  support  from  the National  Natural  Science  Foundation  of  China (11771041, 11831004) and a Marie Sk\l odowska-Curie Fellowship (790623).}
\date{\today}
\keywords{Kinetic theory; Vlasov-Maxwell system; global existence}
\subjclass[2010]{35Q83, 35B40; Secondary: 82D10}
\begin{document}
\maketitle

\begin{abstract}
The global-in-time existence of classical solutions to the relativistic Vlasov-Maxwell (RVM) system in three space dimensions remains elusive after nearly four decades of mathematical research. In this note, a simplified ``toy model'' is presented and studied. This toy model retains one crucial aspect of the RVM system: the phase-space evolution of the distribution function is governed by a transport equation whose forcing term satisfies a wave equation with finite speed of propagation.\\
\medskip

\noindent \emph{The authors thank Claude Bardos and Fran\c{c}ois Golse who proposed this problem over dinner during the workshop ``The Cauchy Problem in Kinetic Theory: Recent Progress in Collisionless Models'' which was held at Imperial College London in 2015. That workshop was held in honor of Bob Glassey, to whose memory this paper is dedicated.}
\end{abstract}


\section{Introduction}\label{sec:intro}
Let $f(t,x,v)\geq0$ denote the one particle distribution in phase space of a monocharged plasma, where $x,v\in\R^3$ denote particle position and momentum, respectively, and $t\geq 0$ is the temporal variable. Taking relativistic effects into account, but neglecting collisions among the particles, $f$ satisfies the relativistic Vlasov-Maxwell system:
\begin{equation}
\tag{RVM}
\label{RVM}
\left. \begin{gathered}
\partial_{t}f+\hat{v} \cdot\nabla_{x}f+(E + \hat{v} \wedge B) \cdot\nabla_{v}f=0\\
\partial_{t} E=\nabla \wedge B- 4\pi j, \quad \nabla \cdot E=4\pi\rho,\\
\partial_{t} B=-\nabla \wedge E, \quad\nabla \cdot B=0,
\end{gathered} \right\}
\end{equation}
where
\begin{equation*}
\label{sources}
\rho(t,x)=\int_{\mathbb{R}^3} f(t,x,v)\,dv, \quad j(t,x)=\int_{\mathbb{R}^3} \hat{v} f(t,x,v)\,dv
\end{equation*}
are the charge and current density of the plasma, respectively, while
\begin{equation*}
\label{phat}
\hat{v} = \frac{v}{\sqrt{1 + \vert v \vert^2}}
\end{equation*}
is the relativistic velocity. Additionally, $E(t,x)$ and $B(t,x)$ are the self-consistent electric and magnetic fields generated by the charged particles, and we have chosen units such that the mass and charge of each particle, as well as the speed of light, are normalized to one.

The rigorous study of the relativistic Vlasov-Maxwell system largely dates back to the 1980s. A local-in-time existence and uniqueness result due to Wollman \cite{Wollman1984} was followed by the conditional result of Glassey and Strauss \cite{Glassey1986}, which to this day remains the most significant step toward a complete existence and uniqueness theory. In  \cite{Glassey1986} it is shown that solutions of \eqref{RVM} remain regular so long as one knows \emph{a priori} that particle momenta are uniformly bounded in time. In other words, if $\sup\{|v|\,|\,\exists x \in \mathbb{R}^3 \text{ s.t. }f(t,x,v)\neq0\}<+\infty$, then  the solution can be continued to time $t+h$ for some small $h>0$. This condition has been shown to hold for small \cite{Glassey1987a} and nearly neutral \cite{Glassey1988} data, and also in lower dimensions \cite{Glassey1990,Glassey1997}. Using Fourier methods, Klainerman and Staffilani \cite{Klainerman2002} provided an alternative method to prove the conditional result in \cite{Glassey1986}.  Bouchut, Golse and Pallard \cite{Bouchut2003} gave yet another proof which relied on the so-called ``division lemma'', which we use as well. More recently, Luk and Strain \cite{Luk2014} were able to improve the conditional result by weakening some of the assumptions.

The problem of global existence in three dimensions remains elusive. It is for this reason that attempts have been made to solve various toy models, in the hope that those may provide further insight into the full problem. Prior to our efforts, two related mean-field systems modeling resonance between a coupled wave equation and a transport equation have been investigated. In particular, G\'erard and Pallard \cite{Gerard2010} considered the one-dimensional relativistic problem
	\begin{equation*}
	\left.\begin{gathered}
	\partial_t f+\hat{v}\partial_x f+E\partial_v f=0,\\
	\Box E=\partial_x\rho,
	\end{gathered}\right\}
	\end{equation*}
where $\rho(t,x) = \int f(t,x,v) \ dv$ as in \eqref{RVM}.
Similarly, Nguyen and Pankavich \cite{Nguyen2014c} considered a related non-relativistic problem (also in one space and one momentum dimensions)
	\begin{equation*}
	\left.\begin{gathered}
	\partial_t f+{v}\partial_x f+B\partial_v f=0,\\
	(\partial_t+\partial_x)B=\rho,
	\end{gathered}\right\}
	\end{equation*} 
with each arriving at global existence results under limited assumptions.

The purpose of this note is to prove a global-in-time existence and uniqueness result for the following toy model of \eqref{RVM} kindly proposed to us by C. Bardos and F. Golse:
\begin{equation}\label{Toy}\tag{Toy}
\left.\begin{gathered}
\partial_t f+\hat{v}\cdot\nabla_x f- \partial_t A\cdot\nabla_v f=0,\\
\Box A=(\partial_{tt}-\Delta) A=j,
\end{gathered}\right\}
\end{equation} 
with initial data $f(0,x,v)=f_0(x,v)$ that is smooth and compactly supported and consistent data for $A$ satisfying $A(0,x)=A_0(x)$ and $\partial_t A(0,x)=A_1(x)$.
Here, the current density $j(t,x)$ is given by
\[j(t,x) = \int \hat{v} f(t,x,v) \ dv.\]
We note that \eqref{Toy} also couples a relativistic transport equation to a mean-field model of particle interaction given be a wave equation. For this system the position and momentum $x,v$ can be taken in $\R^d$ for any $d\geq1$, though $d=3$ is the primary case of interest. Our main result, similar to the two previous results, considers the  case $d=1$.

\subsection{Main results}
We prove local existence for bounded initial data and global existence for initial data that is once continuously differentiable and compactly supported.

\begin{theorem}[Local existence]\label{thm:local}
Suppose that $(f_0,A_0,A_1)\in W^{1,\infty}(\R^2)\times W^{1,\infty}(\R)\times L^\infty(\R)$ with compact support, then there exists $T>0$ such that 
the Cauchy problem \eqref{Toy} has a unique solution 
	\[
	(f,A)\in W^{1,\infty}([0,T)\times\R^2)\times W^{1,\infty}([0,T)\times\R).
	\]
If we denote the maximal lifespan of the solution by $T^*$, then $T^*<+\infty$ necessarily implies
\begin{equation*}
\limsup_{t\to T^*}\left(\|\partial_xf(t,x,v)\|_{L^\infty_{x,v}(\R^2)}+\|\partial_vf(t,x,v)\|_{L^\infty_{x,v}(\R^2)}\right)=+\infty.
\end{equation*}\end{theorem}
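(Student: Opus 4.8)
The plan is to construct the solution by a Picard-type iteration, close the estimates with the one-dimensional d'Alembert representation of $A$, and obtain the continuation criterion by a restarting argument; I would arrange things so that the only estimate which is genuinely time-local is the one for $\nabla_{x,v}f$. Start from $f^{(0)}\equiv f_0$ and, given $f^{(n)}$, set $j^{(n)}=\int\hat v f^{(n)}\,dv$, let $A^{(n)}$ be the d'Alembert solution of $\Box A^{(n)}=j^{(n)}$ with data $(A_0,A_1)$, and let $f^{(n+1)}$ be obtained by transporting $f_0$ along the characteristics $\dot X=\hat V$, $\dot V=-\partial_t A^{(n)}(t,X)$ of the field $(\hat v,-\partial_t A^{(n)})$, which is divergence-free in $(x,v)$. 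Since $\partial_t A^{(n)}$ is only bounded, I would carry this out at the level of mollified data so that the characteristics are classical, and remove the regularization at the end. From $|\hat v|<1$ one gets $\|f^{(n+1)}(t)\|_\infty=\|f_0\|_\infty$, unit-speed spreading of the $x$-support, and for the momentum support $P^{(n+1)}(t)\le P^{(n+1)}(0)+\int_0^t\|\partial_s A^{(n)}(s)\|_\infty\,ds$; differentiating d'Alembert's formula gives $\|\partial_t A^{(n)}(t)\|_\infty\le \|A_0'\|_\infty+\|A_1\|_\infty+\int_0^t\|j^{(n)}(s)\|_\infty\,ds$ with $\|j^{(n)}(s)\|_\infty\le 2P^{(n)}(s)\|f_0\|_\infty$ (here both $|\hat v|<1$ and the compact $v$-support are used). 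These inequalities close into a Gr\"onwall loop whose solution is finite on every bounded interval, so $P^{(n)}$, $\|\partial_t A^{(n)}\|_\infty$ and $\|\nabla_x A^{(n)}\|_\infty$ are bounded on $[0,T]$ for each fixed $T$, uniformly in $n$; in particular nothing in this part of the estimate can make $T^*$ finite.

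The heart of the matter is the bound on $\nabla_{x,v}f^{(n+1)}$. Writing $f^{(n+1)}(t,x,v)=f_0(Z^{(n)})$ along the backward characteristic $Z^{(n)}=(X,V)(0;t,x,v)$, one has $\nabla_{x,v}f^{(n+1)}=\nabla f_0\cdot\nabla_{x,v}Z^{(n)}$, and $\nabla_{x,v}Z^{(n)}$ solves the variational system with coefficients $\partial_v\hat v$ (bounded by $1$) and $\partial_x\partial_t A^{(n)}(s,X(s))$. The source part of $A^{(n)}$ contributes, via d'Alembert, a term controlled by $\int_0^t\|\partial_x j^{(n)}(s)\|_\infty\,ds$ with $\partial_x j^{(n)}=\int\hat v\,\partial_x f^{(n)}\,dv$, hence by $2\int_0^t P^{(n)}(s)\|\partial_x f^{(n)}(s)\|_\infty\,ds$. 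The free part of $A^{(n)}$ naively involves $A_0''$ and $A_1'$, which are not available; this is handled by integrating by parts in $s$ along the characteristic and using $\Box A_{\mathrm{free}}=0$ to trade the unavailable $x$-derivatives for $s$-derivatives, leaving only $\|A_0'\|_\infty$, $\|A_1\|_\infty$ and already-controlled characteristic quantities. Feeding these back into the variational system and applying Gr\"onwall, together with an induction on $n$ in which the time factors produced by each use of d'Alembert supply the smallness, shows that $\|\nabla_{x,v}f^{(n+1)}(t)\|_\infty$ stays bounded on a short interval $[0,T]$ whose length depends only on the data, uniformly in $n$.

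With all iterates confined to a fixed $W^{1,\infty}$-ball on $[0,T]$, I would show $(f^{(n)},A^{(n)})$ is Cauchy in the weaker norm $C([0,T];L^\infty)$: the difference $f^{(n+1)}-f^{(n)}$ solves a transport equation with source $(\partial_t A^{(n)}-\partial_t A^{(n-1)})\partial_v f^{(n)}$, and since $A^{(n)}-A^{(n-1)}$ has zero initial data only the smooth source part of the wave equation enters, giving $\|\partial_t A^{(n)}-\partial_t A^{(n-1)}\|_\infty\le C\int_0^t\|f^{(n)}-f^{(n-1)}\|_\infty\,ds$ and hence a contraction after possibly shrinking $T$. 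The limit lies in the closed $W^{1,\infty}$-ball by weak-$*$ lower semicontinuity, and the nonlinear term passes to the limit despite $\partial_t A\in L^\infty$ only, because divergence-freeness lets one write $\partial_t A\,\partial_v f=\partial_v(\partial_t A\,f)$ while $f^{(n)}\to f$ strongly locally; removing the mollification is identical. Uniqueness is the same difference estimate applied to two solutions. For the continuation criterion, suppose $T^*<+\infty$ but $\sup_{0\le t<T^*}(\|\partial_x f(t)\|_\infty+\|\partial_v f(t)\|_\infty)=:M<\infty$; the momentum support is already bounded on $[0,T^*)$ by the loop above, so $\|\partial_x j(t)\|_\infty$ is bounded, hence $\|\partial_t A(t)\|_\infty$, $\|\nabla_x A(t)\|_\infty$ and $\|\partial_x\partial_t A(t)\|_\infty$ are bounded, hence so is $\|\partial_t f(t)\|_\infty$ through the equation, all on $[0,T^*)$; the uniform Lipschitz bounds then give $f(t)\to f(T^*)$ and $A(t)\to A(T^*)$ in $C_{\loc}$ with admissible limiting data, and applying the local theorem at time $T^*$ extends the solution past $T^*$, contradicting maximality.

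The step I expect to be the main obstacle is the second one: the force in the transport equation is $\partial_t A$, a full derivative of a field that a priori lies only in $W^{1,\infty}$, so estimating $\nabla_{x,v}f$ looks as though it costs a derivative one does not have. In three dimensions this is precisely the difficulty handled by the division lemma of \cite{Bouchut2003}; in one dimension the explicit d'Alembert formula, combined with integration by parts along characteristics (exploiting $\Box A_{\mathrm{free}}=0$ and the fact that $j$ inherits an $x$-derivative from $f$ for free), is the substitute, and the real work is checking that the resulting coupled inequalities genuinely close for short $T$ and uniformly in the approximation parameter.
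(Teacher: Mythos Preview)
Your overall strategy coincides with the paper's: both set up an iteration in which, given a candidate $g$, one solves $\Box A_g=j_g$ via d'Alembert and then transports $f_0$ along the characteristics of $(\hat v,-\partial_t A_g)$; both confine the iterates to a Lipschitz ball and prove contraction in the $L^\infty$ metric, the smallness coming from the factor of $T$ that d'Alembert produces in $\|\partial_t A_g-\partial_t A_{\tilde g}\|_\infty$. Your Picard sequence is exactly the iteration one unrolls to prove the Banach fixed-point theorem that the paper invokes directly, and your continuation argument is the standard restarting step.

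The one substantive difference is in how $\partial_x\partial_t A$ is controlled for the variational system governing $\nabla_{x,v}f$. The paper simply writes $\|A_0''\|_{L^\infty}+\|A_1'\|_{L^\infty}$ into its estimate, tacitly using one more derivative on the wave data than the theorem statement provides. You notice this and propose a genuine fix: write $\partial_t A_{\mathrm{free}}$ as a superposition of profiles $B_0^\pm(x\pm t)$ with $B_0^\pm=A_1\pm A_0'\in L^\infty$, and integrate by parts in $s$ along the characteristic, the factor $(1\pm\hat V)^{-1}$ being bounded because $|\hat V|<1$ strictly on the uniformly bounded momentum support. This is a legitimate device and makes your argument more careful than the paper's on this point; the accompanying mollification (so that the characteristic ODE is classical while $\partial_t A$ is a priori only bounded) is a second care point the paper does not address. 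Your own caveat is accurate: the labor is in checking that the coupled inequalities produced by this integration by parts actually close for short $T$ uniformly in $n$, but the mechanism you describe is sound.
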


\begin{theorem}[Global existence]\label{thm:global}
If $(f_0,A_0,A_1)\in \mathcal{C}_c^1(\R^2)\times \mathcal{C}_c^1(\R)\times \mathcal{C}_c(\R)$, then the Cauchy problem \eqref{Toy} has a unique global solution such that
	\[(f,A)\in \mathcal{C}_c^1([0,\infty)\times\R^2)\times \mathcal{C}_c^1([0,\infty)\times\R).\]
\end{theorem}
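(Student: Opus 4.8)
\emph{Proof plan.} The plan is to combine the local theory and the continuation criterion of Theorem~\ref{thm:local} with two a priori bounds, valid on every interval $[0,T]$ with $T$ below the maximal lifespan $T^*$: first a bound on the momentum support of $f$, and then, conditional on that, a bound on $\|\partial_x f\|_{L^\infty}+\|\partial_v f\|_{L^\infty}$. Throughout I would use d'Alembert's formula in one space dimension to write
\[
\partial_t A(t,x)=D(t,x)+S(t,x),\qquad S(t,x)=\tfrac12\int_0^t\bigl[j(s,x+t-s)+j(s,x-t+s)\bigr]\,ds,
\]
where $D(t,x)=\tfrac12\bigl(A_0'(x+t)-A_0'(x-t)\bigr)+\tfrac12\bigl(A_1(x+t)+A_1(x-t)\bigr)$ is the data-determined solution of the homogeneous wave equation, a superposition of a left- and a right-moving profile built from the compactly supported data $A_0'$ and $A_1$. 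Let $(X(s),V(s))$ solve the characteristic system $\dot X=\hat V$, $\dot V=-\partial_t A(s,X)$; such curves exist on $[0,T]$ as soon as the field is bounded there, and along them $f$ is constant, so $\|f(t)\|_{L^\infty}\le\|f_0\|_{L^\infty}$ and, by finite speed of propagation, the spatial support of $f(t)$ stays compact.

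\emph{First bound: the momentum support.} Let $P(t)$ be the largest $|v|$ appearing in $\supp f(t,\cdot,\cdot)$. Because $|\hat v|\le1$ and $\|f(t)\|_{L^\infty}=\|f_0\|_{L^\infty}$, I get the crude estimate $\|j(t)\|_{L^\infty}\le 2\|f_0\|_{L^\infty}\,P(t)$; together with $\|D(s)\|_{L^\infty}\le\|A_0'\|_{L^\infty}+\|A_1\|_{L^\infty}$ and $\|S(s)\|_{L^\infty}\le\int_0^s\|j(\tau)\|_{L^\infty}\,d\tau$, integrating $\dot V=-\partial_t A$ along a characteristic yields a closed Volterra–Gronwall inequality for $P$, hence $P(t)<\infty$ on every bounded interval. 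Note this step — which is precisely the analogue of the (open) momentum bound for \eqref{RVM} — comes essentially for free in \eqref{Toy}, since the source of the wave equation is $j$ itself, carrying no derivative of $f$ and enjoying the gain $|\hat v|\le1$.

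\emph{Second bound: the derivatives of $f$.} Writing $p=\partial_x f$ and $q=\partial_v f$ and abbreviating $p(s)=p(s,X(s),V(s))$, $q(s)=q(s,X(s),V(s))$, differentiating the transport equation gives, along characteristics,
\[
\frac{d}{ds}q(s)=-(1+V(s)^2)^{-3/2}\,p(s),\qquad \frac{d}{ds}p(s)=(\partial_x\partial_t A)(s,X(s))\,q(s),
\]
so with $G(t):=\|\partial_x f(t)\|_{L^\infty}+\|\partial_v f(t)\|_{L^\infty}$ everything reduces to controlling $\int_0^t(\partial_x\partial_t A)(s,X(s))\,q(s)\,ds$ uniformly over characteristics. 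Here $\partial_x\partial_t A=\partial_x D+\partial_x S$ is the enemy, and a naive estimate fails twice over: $\partial_x D$ is not even a bounded function, since differentiating $D$ would cost a derivative of the merely continuous datum $A_1$; and $\partial_x S$ reintroduces $\partial_x j=\int\hat v\,\partial_x f\,dv$, so estimating $\|\partial_x S(t)\|_{L^\infty}\lesssim_T\int_0^t\|\partial_x f(s)\|_{L^\infty}\,ds$ and feeding it back produces only a Riccati-type inequality that could blow up in finite time. My plan to resolve this — the point at which the relativistic structure is used — is to integrate by parts \emph{along the characteristic}: since $|\hat V(s)|<1$ on $[0,T]$, the maps $s\mapsto X(s)\pm s$ are strictly monotone with derivatives $\hat V(s)\pm1$ bounded away from $0$, so the substitution $y=X(s)\pm s$ turns the dangerous integral into one against $\partial_x D$ (respectively against $\partial_x j$), and one further integration by parts moves that $\partial_x$ onto the smooth factor $q/(\hat V\pm1)$, the compact supports of $A_0',A_1$ and of $j(t,\cdot)$ killing all boundary terms. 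As $\tfrac{d}{ds}[q/(\hat V\pm1)]$ is controlled by $|p|+|\partial_t A|\,|q|\lesssim_T G(s)$ (using the first bound to control $\|\partial_t A(s)\|_{L^\infty}$), the dangerous integral is $\lesssim_T\int_0^t G(s)\,ds$; together with the trivial estimate for $q$ this gives $G(t)\le G(0)+C_T\int_0^t G(s)\,ds$, whence $G$ is finite on $[0,T]$.

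\emph{Conclusion.} The constants $C_T$ above depend only on $T$, on the data, and on $\sup_{[0,T]}P$, and the last of these stays finite even as $T\uparrow T^*$; hence $G$ is bounded on $[0,T^*)$, which by the continuation criterion of Theorem~\ref{thm:local} forces $T^*=+\infty$. Compactness of the support of $(f,A)$ on each time slice follows from finite speed of propagation together with $|\hat v|\le1$. Finally, to upgrade from the Lipschitz solution of Theorem~\ref{thm:local} to a genuine $\mathcal C^1$ solution I would run a standard persistence/mollification argument: solve \eqref{Toy} with smooth approximating data, observe that all of the above estimates are uniform in the regularisation parameter, and pass to the limit, the first difference quotients of $f$ being equicontinuous; uniqueness is inherited from the local theory. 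The step I expect to be hardest is the integration by parts of the second bound: this is exactly where \eqref{Toy} still carries the analytic content of the Glassey–Strauss conditional theory, namely that timelike characteristics cross the light cone of the wave equation transversally, so that no derivative is lost.
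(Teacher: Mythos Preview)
Your approach is correct but takes a genuinely different route from the paper in the crucial second step. The paper bounds $\|\partial_x\partial_t A\|_{L^\infty([0,T]\times\R)}$ \emph{directly} via the Division Lemma (Lemma~\ref{lem:division}): writing $\partial_x\partial_t A = \partial_t Y * \partial_x j$ and using $\hat v\partial_x f = -\partial_t f + \partial_t A\,\partial_v f$, one decomposes $\partial_{xx}Y$ as $(\partial_t+\hat v\partial_x)$ applied to a bounded measure plus a Dirac mass, so that the transport operator falls onto $f$, is replaced by $\partial_t A\,\partial_v f$, and a $v$-integration by parts closes the estimate without ever touching $\partial_x f$ or $\partial_v f$. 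Your argument instead never bounds $\partial_x\partial_t A$ pointwise; you integrate the product $\partial_x\partial_t A\cdot q$ along the particle characteristic and exploit the monotonicity of $s\mapsto X(s)\pm s$ (equivalently $\hat V\pm1\neq0$) to move the spatial derivative onto $q/(\hat V\pm1)$ by a one-dimensional integration by parts. Both arguments encode the same geometric fact---particle characteristics are strictly timelike and cross the light cone transversally---but package it differently: the paper's Division Lemma is more systematic and closer in spirit to the higher-dimensional Bouchut--Golse--Pallard machinery, while your argument is more elementary and, notably, handles the low regularity $A_1\in\mathcal C_c$ honestly (the paper simply assumes $A_0=A_1=0$ ``without loss of generality'' in Proposition~\ref{prop:bound-Atx}, which glosses over exactly the issue you flag).

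One small inaccuracy: the boundary terms from your integration by parts are not ``killed'' by compact support, since the $s$-integral runs over the finite interval $[0,t]$ (or $[\tau,t]$ for the source term), not over all of $\R$. They are, however, bounded---for the data part by $\|A_0'\|_{L^\infty}+\|A_1\|_{L^\infty}$ times $|q|/|\hat V\pm1|$ at the endpoints, and for the source part (after the $\tau$-integration) by $\|j\|_{L^\infty}$ times quantities already controlled by Step~1 and by $|q(t)|$, $\int_0^t|q(\tau)|\,d\tau$. This is all that is needed to close the Gr\"onwall loop, so the argument goes through once you replace ``killed'' by ``bounded''.
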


The proofs of these theorems are contained within Sections \ref{sec:local} and \ref{sec:global}, respectively.
Next, we provide a justification for the structure of \eqref{Toy} and discuss the fundamental issues in obtaining analogous results in three dimensions.

\subsection{Justification of the toy model}
As determined by classical theory, the electromagnetic field $(E,B)$ in \eqref{RVM} is derived from potentials $\phi$ and $A$ that are given by
	\begin{align*}
	E=-\nabla\phi-\partial_tA,\qquad B=\nabla\wedge A.
	\end{align*}
In the Lorenz gauge, Maxwell's equations further reduce to the system of wave equations for the associated potentials, namely
	\begin{align*}
	\Box\phi=\rho,\qquad\Box A=j.
	\end{align*}
It is therefore evident that the simplified model \eqref{Toy} is obtained from \eqref{RVM} by neglecting the potential $\phi$ and assuming that $A$ is irrotational, i.e. $\nabla\wedge A=0$. These are not physically justifiable assumptions, yet they reduce \eqref{RVM} to a simplified system that still retains the main obstacle preventing us from proving global existence: the interplay between the Vlasov equation (which is a transport equation describing the evolution of particles in the system) whose speed of propagation has no \emph{a priori} bound, and the wave equations governing the fields which propagate at a constant and  finite speed (normalized to $c=1$ here).

An important feature of the system \eqref{Toy} is that it has a natural energy. While this conserved quantity is not used in the present note, it is an aspect of this toy model which makes it a natural  `sibling' of \eqref{RVM}. Indeed, multiplying the transport equation by $v_0:=\sqrt{1+|v|^2}$ and integrating in phase space, one easily finds that the quantity
	\begin{equation}
	\label{Ecal} \tag{Energy}
	\Ecal_{\text{Toy}}:=\iint v_0f(t,x,v)\ dv\ dx+\frac12\int\left(|\partial_tA|^2+|\nabla A|^2\right) dx
	\end{equation}
remains constant in time.
In particular, we note that performing this same operation within the previously studied toy problems does not appear to produce a conserved energy.


It is also illuminating to compare \eqref{Toy} with the Vlasov-Poisson system, which is the classical limit of the \eqref{RVM} system as the speed of light tends to infinity \cite{Schaeffer1986}:
\begin{equation}\label{eq:VP}\tag{VP}
\left.\begin{gathered}
\partial_t f+v\cdot\nabla_x f- \nabla\phi\cdot\nabla_v f=0,\\
-\Delta \phi=\rho.
\end{gathered}\right\}
\end{equation} 
The two systems -- \eqref{eq:VP} and \eqref{Toy} -- are very similar, though the former features classical transport, while the latter is relativistic. The most important distinction arises in the equation for the potential, which is elliptic  in \eqref{eq:VP} and  hyperbolic  in \eqref{Toy}.
Additionally, we note that the conserved energy is a crucial ingredient within some of the known proofs \cite{Pfaffelmoser1992, Schaeffer1991}   of global-in-time existence for smooth solutions of \eqref{eq:VP}.
Hence, one may expect \eqref{Ecal} to be similarly important to the study of \eqref{Toy} in three dimensions.

\section{Local existence and uniqueness}\label{sec:local}
We first establish the local-in-time existence and uniqueness result using a fixed-point argument similar to\cite{Gerard2010}.
\begin{proof}[Proof of Theorem \ref{thm:local}]
Since the initial data is assumed to have compact support, we can fix $R>0$ and $M>0$ such that $f_0\in \mathcal{C}_c((-R,R)\times(-M,M))$.

\begin{definition}[The Set $B_T$]
For a given $T>0$, we define $B_T$ to be the set of functions $g \in W^{1,\infty}([0,T]\times\R^2)$ that satisfy
\vspace{0.2cm}

(H1) $g(0,x,v)=f_0(x,v)$ and $\|g\|_{L^\infty([0,T]\times\R^2)}\leq \|f_0\|_{W^{1,\infty}(\R^2)}$;

(H2) $\text{supp}~g\subset [0,T]\times (-R-1, R+1)\times(-M-1,M+1)$;

(H3) $\|g\|_{\text{Lip}}\leq 3\|f_0\|_{W^{1,\infty}(\R^2)}$, where
\begin{equation*}
\|g\|_{\text{Lip}}:=\sup_{\substack{t \in [0,T] \\ x,v\in\R \\ h=(h_1,h_2) \neq 0}}\frac{\left|g(t,x+h_1,v+h_2)-g(t,x,v)\right|}{|h|};
\end{equation*}


(H4) $\|\partial_t g\|_{L^\infty([0,T]\times\R^2)}\leq 3\|f_0\|_{W^{1,\infty}(\R^2)}(2+\|A_0\|_{L^\infty(\R)}+\|A_1\|_{L^\infty(\R)})$.
\end{definition}
\vspace{0.2cm}

When endowed with the metric $d(g_1,g_2):=\|g_1-g_2\|_{L^\infty([0,T]\times\R^2)}$, the metric space $(B_T,d)$ is complete. Next, for any given $g\in B_T$, we define $A_g$ to be the solution to the linear wave equation
\begin{equation}\label{W}
\Box A_g=(\partial_{t}^2-\partial_x^2) A_g=\int_{\R} \hat{v} g(t,x,v) dv:=j_g(t,x),
\end{equation} 
with initial conditions $A_g(0,x)=A_0(x)$ and $\partial_t A_g(0,x)=A_1(x)$ for any $x \in \R$.
\begin{definition}[The Solution Map $\Phi$]
For any $g \in B_T$ we define  the solution map $f =\Phi(g)$ where $f \in W^{1,\infty}((0,T)\times\R^2)$ to be the unique solution of the transport equation
\begin{equation}\label{T}
\partial_t f+\hat{v}\partial_x f- \partial_t A_g(t,x)\partial_v f=0,\quad (t,x,v)\in(0,T)\times\R^2,
\end{equation} 
with initial condition $f(0,x,v)=f_0(x,v)$.
\end{definition}
For the fixed-point argument it suffices to show the following two properties hold for $T$ sufficiently small:

(1) $\Phi$ maps $B_T$ into itself, i.e. for every $g\in B_T$,  $f=\Phi(g)\in B_T$;

(2) $\Phi: B_T\to B_T$ is a contraction, i.e. there is $0<C<1$ such that 
$$\| \Phi(g_1) - \Phi(g_2) \|_{L^\infty([0,T]\times\R^2)} \leq C \|g_1 - g_2\|_{L^\infty([0,T]\times\R^2)}$$
for every $g_1, g_2 \in B_T$. \\

\textbf{Step 1: Preliminary estimates.} Throughout we will use the fact that the mapping $v \mapsto \hat{v} = v/\sqrt{1+ v^2}$ and its derivative are both bounded above by one. The function $A_g$ of \eqref{W} is given by the solution of the wave equation, namely
\begin{equation}\label{wave-s1}
\begin{split}
 A_g(t,x)&=(\partial_t Y(t,\cdot)*_x A_0)(t,x)+(Y(t,\cdot)*_x A_1)(t,x)+(Y(\cdot,\cdot)*_{t,x} (j_g\ONE_{t>0}(t))(t,x)
\end{split}
\end{equation} 
where $Y(t,x)=\frac12 \ONE_{\{|x|\leq t\}}$ is the forward fundamental solution of the one-dimensional wave operator.
Note that the derivatives of $Y$ satisfy
	\begin{equation}\label{eq:Y-deriv}
	\partial_x Y(t,x)
	=
	\frac12\delta_{x=-t}-\frac12\delta_{x=t},
	\quad
	\partial_t Y(t,x)
	=
	\frac12\delta_{x=-t}+\frac12\delta_{x=t}
	\end{equation}
(these expressions will be used later). More explicitly, the d'Alembert formula gives
	\begin{equation*}\label{wave-s2}
	A_g(t,x)
	=
	\frac{1}2[A_0(x+t)+A_0(x-t)]+\frac12\int_{x-t}^{x+t} A_1(s)\,ds+\frac12\int_0^t\int_{x-(t-s)}^{x+(t-s)}j_g(s,y)\,dy\, ds.
	\end{equation*} 
Thus, for $t\in [0,T]$ we find
\begin{align}
 \|A_g(t,\cdot)\|_{L^\infty(\R)}&\leq \|A_0\|_{L^\infty(\R)}+t\|A_1\|_{L^\infty(\R)}+\frac{1}{2}t^2\|j_g\|_{L^\infty([0,T]\times\R)},\notag\\
  \|\partial_tA_g(t,\cdot)\|_{L^\infty(\R)}&\leq \|A'_0\|_{L^\infty(\R)}+\|A_1\|_{L^\infty(\R)}+t\|j_g\|_{L^\infty([0,T]\times\R)}\label{A_g-estimates},\\
    \|\partial_x\partial_tA_g(t,\cdot)\|_{L^\infty(\R)}&\leq \|A''_0\|_{L^\infty(\R)}+\|A'_1\|_{L^\infty(\R)}+t\|\partial_x j_g\|_{L^\infty([0,T]\times\R)}.\notag
\end{align} 
Taking $g\in B_T$, we have by (H1), (H2) and (H3)
\begin{align*}
&\|j_g\|_{L^\infty([0,T]\times\R)}= \left\| \int_{\R} \hat{v} g(\cdot,\cdot,v)dv\right\|_{L^\infty([0,T]\times\R)}\leq 2(M+1)\|f_0\|_{W^{1,\infty}(\R^2)},\\
&\|\partial_x j_g\|_{L^\infty([0,T]\times\R)}= \left\| \int_{\R} \hat{v} \partial_x g(\cdot,\cdot,v)dv\right\|_{L^\infty([0,T]\times\R)}\leq 6(M+1)\|f_0\|_{W^{1,\infty}(\R^2)}.
\end{align*}
Hence, taking $T$ sufficiently small we obtain the estimates
\begin{align}
 \|A_g\|_{L^\infty([0,T]\times\R)}&\leq \|A_0\|_{L^\infty(\R)}+\|A_1\|_{L^\infty(\R)}+1,\notag\\
  \|\partial_t A_g\|_{L^\infty([0,T]\times\R)}&\leq \|A_0'\|_{L^\infty(\R)}+\|A_1\|_{L^\infty(\R)}+1, \label{Ab}\\
  \|\partial_x \partial_t A_g\|_{L^\infty([0,T]\times\R)}&\leq \|A_0''\|_{L^\infty(\R)}+\|A_1'\|_{L^\infty(\R)}+1. \label{Ac}
\end{align} 

\textbf{Step 2:   $\Phi$ maps  $B_T$ into itself.} 
Denote by $(X(s;t,x,v), V(s;t,x,v))$ the characteristic curves of \eqref{T}. They satisfy the system of ODEs
\begin{equation}\label{eq:char-odes}
\left .
\begin{split}
&\frac{d X}{ds}(s; t, x, v)=\hat{V}(s; t, x, v)=\frac{V(s;t,x,v)}{\sqrt{1+V^2(s;t,x,v)}},\\
&\frac{d V}{ds}(s; t, x, v)=-(\partial_t A_g)(s, X(s; t,  x, v)),
\end{split}
\right \}
\end{equation} 
with the initial conditions $X(t;t,x,v)=x$ and  $V(t;t,x,v)=v$. It is well-known that the solution of the transport equation \eqref{T} can  be expressed as
\begin{equation*}
f(t,x,v)=f_0(X(0;t,x,v),V(0;t,x,v)).
\end{equation*}

From this, we immediately find that $f =\Phi(g)$ satisfies $f(0,x,v)=f_0(x,v)$ and 
\begin{equation*}
\|f\|_{L^\infty([0,T])\times\R^2)}=\|f_0(X(0;\cdot,\cdot,\cdot),V(0;\cdot,\cdot,\cdot)))\|_{L^\infty([0,T])\times\R^2)}=\|f_0\|_{L^\infty(\R^2)}
\end{equation*}
so that (H1) is satisfied.
%
Additionally, using \eqref{Ab} we have
for $T>0$ sufficiently small
\begin{align*}
|x|=|X(t;t,x,v)|\leq |X(0;t,x,v)|+ \int_0^t |\hat{V}(s; t, x, v)| ds\leq |X(0;t,x,v)|+ 1
\end{align*}
and
\begin{align*}
|v|=|V(t;t,x,v)|\leq |V(0;t,x,v)|+\int_0^t |(\partial_t A_g)(s, X(s; t,  x, v))| ds\leq |V(0;t,x,v)|+1.
\end{align*} 
for $t\in[0,T]$. 
Now, for $(x,v)\in \text{supp}(f(t,\cdot,\cdot))$  one has  $0\neq f(t,x,v)=f_0(X(0;t,x,v),V(0;t,x,v))$ and consequently $|X(0;t,x,v)|\leq R,
|V(0;t,x,v)|\leq M$. Therefore, $|x|\leq R+1$ and $|v|\leq M+1$ and (H2) is satisfied.\\

Next, we verify (H3).
By the definition of the Lipschitz norm, we have 
	\begin{align*}
	\|f\|_{\text{Lip}}
	&=
	\sup_{\substack{t \in [0,T]\\ (x,v)\neq(y,p) \in \R^2}}\frac{|f(t,x,v)-f(t,y,p)|}{|(x-y,v-p)|}\\
	&=
	\sup_{\substack{t \in [0,T]\\ (x,v)\neq(y,p) \in \R^2}}\frac{|f_0(X(0;t,x,v),V(0;t,x,v))-f_0(X(0;t,y,p),V(0;t,y,p))|}{|(x-y,v-p)|}\\
	&\leq \|f_0\|_{W^{1,\infty}(\R^2)}(\|X\|_{\text{Lip}}+\|V\|_{\text{Lip}})
	\end{align*} 
where the Lipschitz norms of the characteristics are defined by
\begin{equation*}
\|X\|_{\text{Lip}}:=\sup_{\substack{s,t \in [0,T] \\ x,v\in\R \\ h=(h_1,h_2) \neq 0}}\frac{\left |X(s;t,x+h_1,v+h_2)-X(s;t,x,v)\right|}{|h|}
\end{equation*}
and analogously for $\|V\|_{\text{Lip}}$.
Integrating the characteristics of \eqref{eq:char-odes} yields
\begin{align*}
X(\tau;t,x,v) &=x+ \int_t^\tau \hat{V}(s; t, x, v) ds,\\ 
V(\tau;t,x,v) &=v-\int_t^\tau (\partial_t A_g)(s, X(s; t,  x, v)) ds,
\end{align*} 
which provides the following bounds on the Lipschitz norms:
\begin{align*}
\|X\|_{\text{Lip}}&\leq 1+T\|V\|_{\text{Lip}},\\
 \|V\|_{\text{Lip}}&\leq 1+T\|\partial_x\partial_t A_g\|_{L^\infty([0,T]\times\R)}\|X\|_{\text{Lip}}.
\end{align*} 
Summing and using \eqref{Ac} then gives 
\begin{equation*}
\|X\|_{\text{Lip}}+ \|V\|_{\text{Lip}}\leq 2+\frac13\left(\|V\|_{\text{Lip}}+\|X\|_{\text{Lip}}\right),
\end{equation*} 
for $T$ sufficiently small, which implies
$\|X\|_{\text{Lip}}+ \|V\|_{\text{Lip}}\leq 3$.
Inserting this into the estimate on $\| f\|_{\text{Lip}}$, we conclude
\begin{equation}
\label{eq:f-lip-norm}
\|f\|_{\text{Lip}}\leq 3 \|f_0\|_{W^{1,\infty}(\R^2)}
\end{equation}
and (H3) is satisfied.

Finally, we verify (H4). Computing the time derivative of $f$, we find
\begin{align*}
\|\partial_t f\|_{L^\infty([0,T]\times\R^2)}&=\|\partial_t[ f_0(X(0;\cdot,\cdot,\cdot),V(0;\cdot,\cdot,\cdot))]\|_{L^\infty([0,T]\times\R^2)}\\
&\leq \|f_0\|_{W^{1,\infty}(\R^2)} \left (|\partial_t X(0;\cdot, \cdot, \cdot)|_{L^\infty([0,T]\times\R^2)}+|\partial_t V(0;\cdot, \cdot, \cdot)|_{L^\infty([0,T]\times\R^2)} \right ).
\end{align*}
To bound the two terms on the right hand side, we first estimate
\begin{align*}
|\partial_t X(\tau;t,x,v)|&=\left|\partial_t\left(x+ \int_t^\tau \hat{V}(s; t, x, v) ds\right)\right|\\
&=\left|-\hat{v}- \int_\tau^t \partial_v (\hat{V}(s; t, x, v)) \partial_t V(s;t,x,v)ds\right|\\
&\leq 1+ \int_0^t |\partial_t V(s;t,x,v)| ds
\end{align*}
and
\begin{align*}
\left|\partial_tV(\tau;t,x,v)\right|
&=\left|\partial_t\left(v-\int_t^\tau (\partial_t A_g)(s, X(s; t,  x, v)) ds\right)\right|\\
&=\left|\partial_tA_g(t, x) + \int_\tau^t (\partial_x\partial_t A_g)(s, X(s; t,  x, v))  \partial_t X(s;t,x,v)ds\right|\\
&\leq
\|\partial_tA_g\|_{L^\infty([0,T]\times\R)} + \|\partial_x\partial_t A_g\|_{L^\infty([0,T]\times\R)} \int_0^t |\partial_t X(s;t,x,v)|ds.
\end{align*} 
Therefore, using \eqref{Ab} we obtain
\begin{align*}
\sup_{\tau \in [0,t]} \left (|\partial_t X(\tau,t,x,v)| \right .&+ \left.|\partial_t V(\tau,t,x,v)| \right )
\leq 1 +\|\partial_tA_g\|_{L^\infty([0,T]\times\R)} \\
&\hspace{-1cm} + (1 + \|\partial_x\partial_t A_g\|_{L^\infty([0,T]\times\R)}) \int_0^t (|\partial_t V(s;t,x,v)|+|\partial_t X(s;t,x,v)|) ds\\
&\hspace{-2cm}\leq  2+ \|A'_0\|_{L^\infty(\R)}+\|A_1\|_{L^\infty(\R)}\\
& \hspace{-1cm}+ ( 2+\|A''_0\|_{L^\infty(\R)}+\|A'_1\|_{L^\infty(\R)}) \int_0^t \sup_{\tau \in [0,s]} \left (|\partial_t X(\tau;t,x,v)| + |\partial_t V(\tau;t,x,v)| \right) ds.
\end{align*} 
Invoking Gr\"onwall's inequality now yields
\begin{align*}
\sup_{\tau \in [0,t]} \left (|\partial_t X(\tau;t,x,v)| \right.+& \left.|\partial_tV(\tau;t,x,v)| \right )\\
\leq
&\left(2+ \|A'_0\|_{L^\infty(\R)}+\|A_1\|_{L^\infty(\R)} \right)e^{t(2+\|A''_0\|_{L^\infty(\R)}+\|A'_1\|_{L^\infty(\R)})}.
\end{align*} 
Using this estimate we ultimately find
$$|\partial_t X(0;t,x,v)| + |\partial_tV(0;t,x,v)| \leq  3 \left (2+ \|A'_0\|_{L^\infty(\R)}+\|A_1\|_{L^\infty(\R)} \right )$$
for all $t \in [0,T]$, $x,v \in \R$ and $T > 0$ sufficiently small.
Taking the supremum over $x,v\in \mathbb{R}$ and combining this with the estimate of $\| \partial_t f \|_{L^\infty([0,T]\times\R^2)}$ yields
$$\|\partial_t f\|_{L^\infty([0,T]\times\R^2)}\leq 3\|f_0\|_{W^{1,\infty}(\R^2)}(2+\|A_0\|_{L^\infty(\R)}+\|A_1\|_{L^\infty(\R)})$$
and (H4) is satisfied.\\

\textbf{Step 3: $\Phi$ is a contraction.}
Let $g,\tilde{g}\in B_T$ and $f=\Phi(g), \tilde{f}=\Phi(\tilde{g})$. Then, subtracting the respective Vlasov equations yields
\begin{equation*}\label{T'}
\partial_t (f-\tilde{f})+\hat{v}\partial_x (f-\tilde{f})- \partial_t A_g(t,x)\partial_v (f-\tilde{f})- \big(\partial_t A_g(t,x)-\partial_t A_{\tilde g}(t,x)\big)\partial_v \tilde{f}=0
\end{equation*} 
with $(f-\tilde f)(0,x,v)=0$. Consequently
\begin{align}
& (f-\tilde{f})(t, X(t;0,x,v), V(t;0,x,v))\notag
\\&=-\int_0^t \big(\partial_t A_g-\partial_t A_{\tilde g}\big)(s, X(s;0,x,v))\cdot \partial_v \tilde{f}(s,X(s;0,x,v), V(s;0,x,v))ds.\label{eq:f-ftilde}
\end{align} 
Using \eqref{A_g-estimates} and \eqref{eq:f-lip-norm}, we have the following estimates for the right side of \eqref{eq:f-ftilde}:
\begin{align*}
&\|\partial_v \tilde{f}\|_{L^\infty([0,T]\times\R^2)}\leq 3 \|f_0\|_{W^{1,\infty}(\R^2)},\\
&\|\partial_t A_g-\partial_t A_{\tilde g}\|_{L^\infty([0,T]\times\R)}\leq 2T\|j_{g}-j_{\tilde{g}}\|_{L^\infty([0,T]\times\R)}.
\end{align*} 
Therefore, we find from \eqref{eq:f-ftilde} that 
\begin{equation*}
\|f-\tilde{f}\|_{L^\infty([0,T]\times\R^2)}\leq 3 \|f_0\|_{W^{1,\infty}(\R^2)} T^2 \|j_{g}-j_{\tilde{g}}\|_{L^\infty([0,T]\times\R)},
\end{equation*}
which implies
\begin{equation*}
\|f-\tilde{f}\|_{L^\infty([0,T]\times\R^2)}\leq \frac12\|g-\tilde{g}\|_{L^\infty([0,T]\times\R^2)}
\end{equation*}
provided that $T$ is sufficiently small.
Thus, we obtain a unique local solution to the Cauchy problem \eqref{Toy} on $[0,T]$ for $T$ sufficiently small.
Furthermore, we can extend the lifespan of the solution as long as derivatives remain finite, namely for any $ t\in[0,T]$ such that
\begin{equation*}
\|\partial_xf(t,\cdot,\cdot)\|_{L^\infty(\R^2)}+\|\partial_vf(t,\cdot,\cdot)\|_{L^\infty(\R^2)}<+\infty.
\end{equation*}
This completes the proof.
\end{proof}

\section{Global existence}\label{sec:global}
With the existence of a local-in-time solution established, we now extend the solution globally by uniformly bounding the momentum support of the distribution function and the derivatives of the field.
\begin{proof}[Proof of Theorem \ref{thm:global}]
We assume that the maximal life span is $[0, T^*)$ for some $T^*>0$, and shall  prove that $T^*=+\infty$, hence
the  solution is global. We need to show
	\begin{equation}\label{eq:lifespan-cond}
	(f,\partial_t A)\in W^{1,\infty}([0, T^*)\times \R^2)\times W^{1,\infty}([0, T^*)\times \R).\\
	\end{equation}

\medskip
\textbf{Step 1: Bounds on $f$ and $\p_tA$.}
From the proof of local existence, we know that $f$ has compact support for any $t\in [0, T^*)$.  In particular, the $v$ support of $f$ is uniformly bounded for any fixed time $t\in [0,T^*)$ but may tend to $+\infty$ as $t\to T^*$. We therefore define the following crucial quantity
\begin{equation*}
P(t):=\sup\{|v|: \exists \,x \in \mathbb{R} \, \text{such that}\, f(t,x,v)\neq 0\}
\end{equation*}
and prove the following result.
\begin{proposition}\label{prop:M-bound}
Let $T^*$ be the maximal lifespan of the solution and let $T\in(0,T^*)$. Then there exists $C>0$ independent of $T$ such that
\begin{equation}\label{eq:A-bound}
\|\partial_t A\|_{L^\infty([0,T] \times \R)}\leq C
\end{equation}
and
	\begin{equation}\label{eq:P-bound}
	P(T)\leq C.
	\end{equation}
\end{proposition}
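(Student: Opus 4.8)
The plan is to bound $\|\partial_t A\|_{L^\infty}$ and $P(T)$ simultaneously by exploiting the finite speed of propagation in the wave equation. First, I would write $A$ via the representation formula \eqref{wave-s1}: the contributions from $A_0$ and $A_1$ are controlled once and for all on $[0,T^*)$ by the initial data (using the d'Alembert formula and $\partial_t A_0, A_1 \in L^\infty$), so the only term requiring care is the Duhamel term involving $j = \int \hat v f\,dv$. Differentiating in $t$ and using \eqref{eq:Y-deriv}, the key observation is that $\partial_t A(t,x)$ at a point depends only on $j$ along the two backward characteristics $y = x \pm (t-s)$, so $|\partial_t A(t,x)| \lesssim \|A_0\|_{W^{1,\infty}} + \|A_1\|_{L^\infty} + \int_0^t \|j(s,\cdot)\|_{L^\infty}\,ds$. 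Since $|\hat v|\le 1$ and $f$ is supported in $|v|\le P(s)$ with $\|f\|_\infty = \|f_0\|_\infty$, one gets the crude bound $\|j(s,\cdot)\|_{L^\infty} \le 2 P(s)\|f_0\|_\infty$, hence $\|\partial_t A(t,\cdot)\|_{L^\infty} \le C_0 + C_1\int_0^t P(s)\,ds$ for constants depending only on the data.

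Next I would close the loop through the characteristic ODEs \eqref{eq:char-odes}. Along a characteristic emanating from the support of $f$, $|V(t)| \le M + \int_0^t \|\partial_t A(s,\cdot)\|_{L^\infty}\,ds$, so $P(t) \le M + \int_0^t \|\partial_t A(s,\cdot)\|_{L^\infty}\,ds$. Combining this with the previous inequality gives $P(t) \le M + C_0 t + C_1 \int_0^t\!\int_0^s P(r)\,dr\,ds$, a double-integral (equivalently second-order Gronwall) inequality. Iterating/Gronwall then yields $P(t) \le C(1+t)e^{\sqrt{C_1}\,t}$ or similar — in any case finite on every bounded interval $[0,T]$ with $T < T^*$, with a constant independent of $T$ in the sense that it is given by an explicit function of $T^*$ and the data; more carefully, the bound is a fixed continuous function of $T$, so for $T \in (0,T^*)$ one just takes $C = C(T^*)$ if $T^* < \infty$. (If $T^*=\infty$ the proposition's conclusion is for each fixed $T$, which is exactly what the estimate gives.) Feeding the bound on $P$ back into the estimate for $\partial_t A$ gives \eqref{eq:A-bound}, and the bound on $P$ is \eqref{eq:P-bound}.

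The subtle point — and I expect this to be the main obstacle — is the uniformity claim "$C$ independent of $T$." A naive Gronwall bound blows up like $e^{CT}$ and is not uniform on $[0,T^*)$ unless $T^* < \infty$; the statement only makes sense if $C$ is allowed to depend on $T^*$ (finite case) or the conclusion is read pointwise in $T$. I would therefore be careful to state that $C$ depends on the data and on $T^*$ but not on the particular $T \in (0,T^*)$, which is all that is needed to later conclude, via the continuation criterion from Theorem \ref{thm:local}, that $T^* = \infty$ is forced — and at that stage one re-examines: a contradiction argument shows that if $T^* < \infty$ then $P$ and $\partial_t A$, hence (by the Lipschitz estimates of Section \ref{sec:local} reproduced on $[0,T]$) the derivatives $\partial_x f, \partial_v f$, stay bounded up to $T^*$, contradicting the blow-up alternative. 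A secondary technical nuisance is that the crude bound $\|j\|_\infty \lesssim P$ wastes a dimension's worth of decay; in $d=1$ this is harmless, but it is worth noting explicitly since it is exactly the place where the argument would fail in $d=3$.
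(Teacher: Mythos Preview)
Your proposal is correct and follows essentially the same strategy as the paper: represent $\partial_t A$ via d'Alembert (the paper does this by introducing $B^\pm=\partial_tA\pm\partial_xA$ and integrating the transport equations $(\partial_t\mp\partial_x)B^\pm=j$, which yields exactly your formula), bound $|j|\le \|f_0\|_\infty P(t)$, bound $P(t)$ by $P(0)+\int_0^t\|\partial_tA\|_\infty$, and close via Gr\"onwall. The only cosmetic difference is that the paper runs Gr\"onwall directly on $\|\partial_tA\|$ rather than on $P$, and your reading of ``$C$ independent of $T$'' as ``$C=C(\text{data},T^*)$'' is precisely what the paper means (it writes $\|\partial_tA\|\le C_3e^{T^*}$).
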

\begin{proof}
Our strategy is classical: we generate a Gr\"onwall-type inequality by first using the wave equation $\Box A=j$ to show that $\partial_tA$ is controlled by $j$, then showing that $j$ can be controlled by $P(t)$, and finally bounding $P(t)$ by a time-integral of $\partial_tA$.

 Define the functions
\begin{align*}
B^\pm(t,x)&=\partial_t A(t,x)\pm\partial_x A(t,x).
\end{align*} 
Using the relationship $\Box A=j$, we have
\begin{align*}
(\partial_t\mp\partial_x)B^\pm(t,x)&=j(t,x),
\end{align*} 
so that, for any $h>0$,
\begin{align*}
\partial_\tau[B^\pm(\tau,x\pm(t+h-\tau))]&=[(\partial_t\mp\partial_x)B^\pm](\tau,x\pm(t+h-\tau))=j(\tau,x\pm(t+h-\tau)).
\end{align*} 
Integrating with respect to $\tau \in [t,t+h]$, we obtain
\begin{align*}
B^\pm(t+h,x)&=B^\pm(t,x\pm h)+\int_t^{t+h}j(\tau,x\pm(t+h-\tau)) d\tau.
\end{align*} 
Taking $t=0$ and replacing $h$ by $t$, we can represent $B^\pm$ as: 
\begin{align*}
B^\pm(t,x)&=B^\pm(0,x\pm t)+\int_0^{t}j(\tau,x\pm(t-\tau)) d\tau.
\end{align*} 
This allows us to represent $\p_tA$ as follows:
	\begin{align}
	\partial_tA(t,x)
	=&
	\frac12(B^+(t,x)+B^-(t,x))\notag\\
	=&
	\frac 12\left(A_0'(x+t)-A_0'(x-t)+A_1(x+t)+A_1(x-t)\right)\label{paA}\\
	&+
	\frac12\int_0^{t}\left(j(\tau,x-(t-\tau))+j(\tau,x+(t-\tau))\right) d\tau.\notag
	\end{align}

We are now ready to prove \eqref{eq:A-bound}. Using \eqref{paA} and the properties of the initial data, for $T\in[0,T^*)$ we can estimate 
	\begin{align*}
	\|\partial_t A\|_{L^\infty([0,T] \times\R)}
	&\leq
	C_1\left(1+\sum_\pm\sup_{\substack{x\in\R\\ t\in[0, T]}}\left|\int_0^{t}j(\tau,x\pm(t-\tau)) d\tau \right|\right)
	\end{align*} 
where $C_1$  only depends on the initial data.
We therefore turn to bounding $\|j\|_{L^\infty([0,T]\times\R)}$. Because the relativistic velocity is bounded above by $|\hat{v}|<1$ and using the definition of $P(t)$, we have
	\begin{align*}
	|j(t,x)|
	=
	\left|\int_\R\hat{v}f(t,x,v)dv\right|
	\leq
	\|f_0\|_{L^\infty}P(t)
	\end{align*}
for all $t \in [0,T]$ and $x \in \mathbb{R}$.	
Due to the characteristic equations \eqref{eq:char-odes}, the change in velocity is governed by $\partial_tA$ so that
	\begin{equation*}
	P(t)
	\leq
	P(0)+\int_0^t\|\partial_tA(\tau,\cdot)\|_{L^\infty}d\tau.
	\end{equation*}
Inserting the last three estimates into one another, we find
	\begin{equation}\label{eq:gronwall}
	\|\partial_t A\|_{L^\infty([0,T] \times\R)}
	\leq
	C_2\left(1+\int_0^T\|\partial_tA(\tau,\cdot)\|_{L^\infty([0,T] \times\R)} d\tau\right)
	\end{equation}
where $C_2$ also only depends on the initial data.
A standard Gr\"onwall argument applied to \eqref{eq:gronwall} yields
	\begin{equation*}
	\|\partial_t A\|_{L^\infty([0,T] \times\R)}
	\leq
	C_3e^T
	\leq
	C_3e^{T^*}
	\leq
	C
	\end{equation*}
where $C_3$ again only depends on the initial data and $C<+\infty$ depends only on the initial data and $T^*$, but not on $T$. Therefore
	\begin{equation*} 
	P(T)
	\leq
	P(0)+CT^*.
	\end{equation*}
This completes the proof of Proposition \ref{prop:M-bound}.
\end{proof}

\textbf{Step 2: Bounds on the derivatives of $f$ and $\p_tA$.}
The transport  equation  for $f$ in \eqref{Toy} takes the following form for the derivatives of $f$:
\begin{equation*}
(\partial_t +\hat{v}\partial_x + \partial_t A(t,x)\partial_v) \left(\begin{matrix}\partial_x f\\ \partial_v f\end{matrix}\right)=
-\left(\begin{matrix}0 &\partial_x\partial_t A(t,x)\\ (1+|v|^2)^{-3/2}& 0\end{matrix}\right)\left(\begin{matrix}\partial_x f\\ \partial_v f\end{matrix}\right).
\end{equation*} 

We therefore need to bound $\partial_x\partial_t A$.

\begin{proposition}\label{prop:bound-Atx}
Let $T^*$ be the maximal lifespan of the solution and let $T\in(0,T^*)$. Then there exists $C>0$ independent of $T$ such that
\begin{equation*}
\|\partial_x\partial_t A\|_{L^\infty([0,T] \times \R)}\leq C.
\end{equation*}
\end{proposition}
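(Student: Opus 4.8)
\emph{Proof proposal.} The plan is to get a usable expression for $\partial_x\partial_tA$ by differentiating the representation \eqref{paA} in $x$, and then to control the resulting source integral by a division argument powered by the a priori bounds \eqref{eq:A-bound} and \eqref{eq:P-bound} of Proposition \ref{prop:M-bound}. Writing $J^\pm(t,x):=\int_0^t j(\tau,x\pm(t-\tau))\,d\tau$, formula \eqref{paA} gives
\begin{equation*}
\partial_x\partial_tA(t,x)=\tfrac12\bigl(A_0''(x+t)-A_0''(x-t)+A_1'(x+t)+A_1'(x-t)\bigr)+\tfrac12\bigl(\partial_xJ^-(t,x)+\partial_xJ^+(t,x)\bigr),
\end{equation*}
where the first group of terms is controlled purely by the initial data exactly as in \eqref{Ac}. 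Since $f\in W^{1,\infty}$ has compact support (from the local theory), $j$ is Lipschitz and for a.e.\ $(t,x)$ one has $\partial_xJ^\pm(t,x)=\int_0^t\!\int_\R\hat v\,(\partial_xf)(\tau,x\pm(t-\tau),v)\,dv\,d\tau$. The obstacle is plain: $\partial_xf$ appears here, but $\partial_xf$ is precisely the quantity we intend to bound afterwards by Gr\"onwall in Step 2, so estimating it directly would be circular.

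To break the circularity, differentiate $f(\cdot,\cdot,v)$ (with $v$ frozen) along the backward characteristic line $\tau\mapsto\gamma^\pm(\tau):=(\tau,x\pm(t-\tau))$ of the wave operator through $(t,x)$, and use the transport equation $\partial_tf=-\hat v\,\partial_xf+\partial_tA\,\partial_vf$ to substitute for $\partial_tf$; this yields
\begin{equation*}
\frac{d}{d\tau}\bigl[f(\gamma^\pm(\tau),v)\bigr]=\mp(1\pm\hat v)\,(\partial_xf)(\gamma^\pm(\tau),v)+(\partial_tA)(\gamma^\pm(\tau))\,(\partial_vf)(\gamma^\pm(\tau),v).
\end{equation*}
Because $|\hat v|<1$, one may solve for $\partial_xf$ by dividing by $1\pm\hat v$; substituting this into the integral for $\partial_xJ^\pm$, pulling the $\tau$-independent weight $\hat v/(1\pm\hat v)$ out of the $\tau$-integral and integrating the total derivative, and integrating the remaining $\partial_vf$ term by parts in $v$ (the boundary terms vanish since $f(\tau,y,\cdot)$ has compact support in $v$), I expect to reach an identity of the form
\begin{equation*}
\partial_xJ^\pm(t,x)=\mp\int_\R\frac{\hat v}{1\pm\hat v}\bigl[f(t,x,v)-f(0,x\pm t,v)\bigr]dv\mp\int_0^t(\partial_tA)(\gamma^\pm(\tau))\int_\R\frac{(1+v^2)^{-3/2}}{(1\pm\hat v)^2}\,f(\gamma^\pm(\tau),v)\,dv\,d\tau.
\end{equation*}

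The concluding step is to see that every factor here is bounded uniformly in $T\in(0,T^*)$. By \eqref{eq:P-bound}, the $v$-support of $f(\tau,\cdot,\cdot)$ lies in $\{|v|\le C\}$ for all $\tau\in[0,T]$, and on that set $1\pm\hat v\ge 1-C/\sqrt{1+C^2}>0$, so $\hat v/(1\pm\hat v)$ and $(1+v^2)^{-3/2}/(1\pm\hat v)^2$ are bounded there by a constant depending only on $C$; combined with $\|f(\tau,\cdot,\cdot)\|_{L^\infty}=\|f_0\|_{L^\infty}$, the fact that the $v$-integrals run over a set of measure at most $2C$, the bound $\|\partial_tA\|_{L^\infty([0,T]\times\R)}\le C$ from \eqref{eq:A-bound}, and $t\le T<T^*$, all terms are dominated by a constant depending only on the initial data and $T^*$.

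The main obstacle, and the heart of the matter, is the division by $1\pm\hat v$: it is legitimate, and — more importantly — keeps the quotient \emph{bounded}, only because \eqref{eq:P-bound} has already confined the momentum support to a compact set on which $|\hat v|$ stays away from $1$. This is the one-dimensional incarnation of the division lemma, and it is why Proposition \ref{prop:M-bound} must be proved first. Everything else (differentiation under the integral sign, the integrations by parts, the data terms handled as in \eqref{Ac}) is routine bookkeeping.
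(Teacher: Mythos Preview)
Your argument is correct and rests on the same mechanism as the paper's: the division by $1\pm\hat v$, rendered harmless by the uniform momentum bound \eqref{eq:P-bound}. The difference is one of packaging. The paper routes the computation through the distributional Division Lemma (Lemma~\ref{lem:division}): it first trades $j_{\partial_x f}$ for $-\partial_t\rho$ via the Vlasov equation, moves the $\partial_t$ onto the fundamental solution to produce $\partial_{xx}Y*\rho$, and then invokes the identity \eqref{eq:div1} for $\partial_{xx}Y$. You instead work directly on the d'Alembert representation \eqref{paA} and solve for $\partial_xf$ along each wave characteristic. The resulting weights coincide after a little algebra --- your $(1+v^2)^{-3/2}/(1\pm\hat v)^2$ is exactly the paper's $|v\mp v_0|/(v_0(v_0\pm v))=1/(v_0(v_0\pm v)^2)$ --- so the final estimates match. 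Your presentation is more elementary and self-contained in one dimension, avoiding the convolution-and-distribution bookkeeping; the paper's has the advantage of isolating the division step as a reusable lemma, which is the form in which it extends to higher dimensions in \cite{Bouchut2003}. One incidental remark: the paper estimates its weight by $Cv_0\le C(1+P(T))$, i.e.\ linearly in the momentum, whereas you invoke the lower bound $1\pm\hat v\ge 1-P(T)/\sqrt{1+P(T)^2}$; both are legitimate once $P(T)$ is bounded, but the former degenerates more gracefully and is closer to what one would want in a three-dimensional argument.
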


\begin{proof}
Since $A$ satisfies $\Box A=j$, inverting the wave operator means that $A$ is obtained from $j$ and the initial data via the expression \eqref{wave-s1}. Assuming, without loss of generality, that the initial data for the field is trivial, i.e., $A_0=A_1=0$,  \eqref{wave-s1} reduces to
\begin{equation*}
 A_f(t,x)=(Y(\cdot,\cdot)*_{t,x} (j_f\ONE_{t>0})(t,x),
\end{equation*} 
where we recall that $Y=\frac12 \ONE_{\{|x|\leq t\}}$ is the forward fundamental solution of the one-dimensional wave operator. Therefore
	\begin{align*}
	\partial_x\partial_t A_f(t,x)
	=\partial_x\partial_t\left(Y(\cdot,\cdot)*_{t,x} (j_f\ONE_{t>0})\right)(t,x)
	=\left(\partial_tY(\cdot,\cdot)*_{t,x} (j_{\partial_x f}\ONE_{t>0})\right)(t,x),
	\end{align*}
and using the Vlasov equation $\partial_t f+\hat{v}\partial_x f-\partial_t A_f \partial_{v}f=0$ in the term $j_{\partial_x f}=\int\hat{v}\partial_x f\,dv$, we have
	\begin{align*}
	\partial_x\partial_t A_f(t,x)
	&=\left[\partial_tY(\cdot,\cdot)*_{t,x} \left(\partial_t \rho_f \ONE_{t>0}\right)\right](t,x).
	\end{align*}
Integrating by parts in the convolution (and henceforth dropping the subscript $f$ for brevity), we transfer the time derivative from $\rho$ to $Y$ so that 
	\begin{align*}
	\partial_x\partial_t A(t,x)
	&=\left[\partial_{tt}Y(\cdot,\cdot)*_{t,x}\rho \ONE_{t>0}\right](t,x)+\left[\partial_{t}Y(t,\cdot)*_x\rho(0,\cdot)\right](x). 
	\end{align*} 
As the fundamental solution $Y$ satisfies $\Box Y=\delta_{(t,x)=(0,0)}$, this further yields
  	\begin{align*}
	\partial_x\partial_t A(t,x)
	&=
	\left[\left(\partial_{xx}Y+\delta_{(t,x)=(0,0)}\right)*_{t,x}\rho\ONE_{t>0}\right](t,x)+\left[\partial_{t}Y(t,\cdot)*_x\rho(0,\cdot)\right](x)\\
	&=
	\underbrace{\left[\partial_{xx}Y*_{t,x}\rho\ONE_{t>0}\right](t,x)}_{I}+\underbrace{\left[\partial_{t}Y(t,\cdot)*_x\rho(0,\cdot)\right](x)}_{II}+\rho(t,x).
	\end{align*} 
Let us consider the  term $I$. Using the division lemma (Lemma \ref{lem:division}) with $a(v)=\hat{v}=\frac{v}{\sqrt{1+v^2}}$ we write $\partial_{xx}Y$ as
	\begin{equation*}
	\partial_{xx}Y
	=
	(\partial_t+\hat{v}\partial_x)\left(\frac{x}{\hat{v}x-t}\partial_x Y\right)+(1+v^2)\delta_{(t,x)=(0,0)}.
	\end{equation*}
Because this holds for every $v$, in the term $I$ we replace $\rho$ with $\int f\,dv$ and get
	\begin{align*}
	I
	&=
	\int\left[(\partial_t+\hat{v}\partial_x)\left(\frac{x}{\hat{v}x-t}\partial_x Y\right)*_{t,x}f(\cdot,\cdot,v)\ONE_{t>0}\right](t,x)\,dv\\
	&\qquad+
	\int\left[(1+v^2)\delta_{(t,x)=(0,0)}*_{t,x}f(\cdot,\cdot,v)\ONE_{t>0}\right](t,x)\,dv\\
	&=
	\int\left[\frac{x}{\hat{v}x-t}\partial_x Y*_{t,x}(\partial_t+\hat{v}\partial_x)(f(\cdot,\cdot,v)\ONE_{t>0})\right](t,x)\,dv+\int(1+v^2)f(t,x,v)\,dv\\
	&=
	\underbrace{\int\left[\frac{x}{\hat{v}x-t}\partial_x Y*_{t,x}\partial_tA\partial_vf(\cdot,\cdot,v)\ONE_{t>0}\right](t,x)\,dv}_{I_a}\\
	&\qquad+
	\underbrace{\int\left[\frac{x}{\hat{v}x-t}\partial_x Y*_{t,x}f(\cdot,\cdot,v)\delta_{t=0}\right](t,x)\,dv}_{I_b}
	+
	\int(1+v^2)f(t,x,v)\,dv.
	\end{align*}
Using the properties of the derivatives of $Y$ (see \eqref{eq:Y-deriv}), we can simplify the terms $I_a$ and $I_b$. Let us  first consider the term $I_a$:
	\begin{align*}
	I_a
	&=
	\int\left[\frac{x}{\hat{v}x-t}\partial_x Y*_{t,x}\partial_tA\partial_vf(\cdot,\cdot,v)\ONE_{t>0}\right](t,x)\,dv\\
	&=
	\frac12\int\left[\frac{x}{\hat{v}x-t}(\delta_{x=-t}-\delta_{x=t})*_{t,x}\partial_tA\partial_vf(\cdot,\cdot,v)\ONE_{t>0}\right](t,x)\,dv\\
	&=
	\frac12\int\int_0^t\int\frac{y}{\hat{v}y-s}(\delta_{y=-s}-\delta_{y=s})\partial_tA(t-s,x-y)\partial_vf(t-s,x-y,v)\,dy\,ds\,dv\\
	&=
	\frac12\sum_\pm\int\int_0^t\frac{1}{1 \pm \hat{v}}\partial_tA(t-s,x\pm s)\partial_vf(t-s,x\pm s,v)\,ds\,dv.
	\end{align*}
To integrate by parts in $v$, we observe that
	\begin{equation*}
	\frac{d}{dv}\left(\frac{1}{1\pm\hat{v}}\right)
	=
	\frac{v\mp \sqrt{1+v^2}}{1+v^2\pm v\sqrt{1+v^2}}
	\end{equation*}
and we obtain
	\begin{align*}
	I_a
	&=
	\frac12\sum_\pm\mp\int\int_0^t\frac{v\mp \sqrt{1+v^2}}{1+v^2\pm v\sqrt{1+v^2}}\partial_tA(t-s,x\pm s)f(t-s,x\pm s,v)\,ds\,dv.
	\end{align*}
Turning to the term $I_b$, and again using \eqref{eq:Y-deriv}, we have
	\begin{align*}
	I_b
	&=
	\frac12\int\left[\frac{x}{\hat{v}x-t}(\delta_{x=-t}-\delta_{x=t})*_{t,x}f(\cdot,\cdot,v)\delta_{t=0}\right](t,x)\,dv\\
	&=
	\frac12\int\int_0^t\int\frac{y}{\hat{v}y-s}(\delta_{y=-s}-\delta_{y=s})f(t-s,x-y,v)\delta_{t-s=0}\,dy\,ds\,dv\\
	&=
	\frac12\sum_\pm\pm\int\frac{1}{\hat{v}\pm1}f(0,x\pm t,v)\,dv.
	\end{align*}
Now we can consider the term $II$ which easily simplifies to
	\begin{align*}
	II
	=
	\left[\partial_{t}Y(t,\cdot)*_x\rho(0,\cdot)\right](x)
	=
	\frac12\int(\delta_{y=-t}+\delta_{y=t})\rho(0,x-y)\,dx
	=
	\frac12\sum_\pm\rho(0,x\pm t).
	\end{align*}
Collecting these terms, we have
	\begin{align*}
	\partial_x\partial_t A(t,x)
	&=
	\frac12\sum_\pm\mp\int\int_0^t\frac{v\mp \sqrt{1+v^2}}{1+v^2\pm v\sqrt{1+v^2}}\partial_tA(t-s,x\pm s)f(t-s,x\pm s,v)\,ds\,dv\\
	&\qquad+
	\underbrace{\frac12\sum_\pm\pm\int\frac{1}{\hat{v}\pm1}f(0,x\pm t,v)\,dv+
	\frac12\sum_\pm\rho(0,x\pm t)}_{\text{data}}\\
	&\qquad+\int(2+v^2)f(t,x,v)\,dv.
	\end{align*}
To estimate $\partial_x\partial_t A_f(t,x)$ we need to bound the first and last terms; the ``data'' terms depend on the initial data and are therefore finite and independent of $t$.
The integrand within the first term can be controlled by $1+ P(T)$.
Indeed, letting $v_0 = \sqrt{1 + v^2}$ yields
$$\left|\frac{v\mp \sqrt{1+v^2}}{1+v^2\pm v\sqrt{1+v^2}}\right| = v_0^{-1} \left | \frac{v \mp v_0}{v_0 \pm v} \right | = v_0^{-1} |v_0 \mp v |^2 \leq 2v_0,$$
and thus
\begin{align*}
\sup_{|v|\leq P(T)}\left|\frac{v\mp \sqrt{1+v^2}}{1+v^2\pm v\sqrt{1+v^2}}\right|
\leq 2 \sqrt{1 + P(T)^2}
\leq 2(1+ P(T)).
\end{align*}

Hence, allowing $C$ to be a constant independent of $T$ that may change from line to line, we have
	\begin{align*}
	\|\partial_x\partial_t A&\|_{L^\infty([0,T] \times\R)}
	\leq
	\text{data}\\
	&+
	CTP(T)\|\partial_t A\|_{L^\infty([0,T] \times\R)}\|f_0\|_{L^\infty(\R)}\sup_{|v|\leq P(T)}\left|\frac{v\mp \sqrt{1+v^2}}{1+v^2\pm v\sqrt{1+v^2}}\right|\\
	&+
	\|f_0\|_{L^\infty(\R)}(2+P(T)^2)P(T)\\
	\leq&
	\text{data}+CP(T)\|f_0\|_{L^\infty(\R)}\left(T^*(1+P(T))\|\partial_t A\|_{L^\infty([0,T] \times\R)}+2+P(T)^2\right).
	\end{align*}
Inserting here the uniform bounds on $\|\partial_t A\|_{L^\infty([0,T] \times\R)}$ and $P(T)$ from \eqref{eq:A-bound} and \eqref{eq:P-bound}, respectively, we conclude a uniform bound independent of $T$ on 
$\|\partial_x\partial_t A\|_{L^\infty([0,T] \times \R)}$, and the proof is complete.
\end{proof}

The bound for $\|\partial_x\partial_t A\|_{L^\infty([0,T] \times\R)}$ proved in  Proposition \ref{prop:bound-Atx} leads immediately to a uniform bound  for the derivatives of $f$, so that the condition \eqref{eq:lifespan-cond} is verified,
	\begin{equation*}
	(f,\partial_t A)\in W^{1,\infty}([0, T^*)\times \R^2)\times W^{1,\infty}([0, T^*)\times \R),
	\end{equation*}
and the solution is global.
This completes the proof of Theorem \ref{thm:global}.
\end{proof}

\appendix
\section{Division lemma} 

\begin{lemma}[Division lemma, originally in \cite{Bouchut2003}, appearing  in this particular form in \cite{Gerard2010}]\label{lem:division}
Let $Y(t,x)=\frac12 \ONE_{\{|x|\leq t\}}$ be the forward fundamental solution of the $1d$ wave operator and $a(v)\in(-1,1)$. Then the equality
\begin{equation}\label{eq:div1}
\partial_x^2Y=(\partial_t+a(v)\partial_x)\left(\frac{x}{a(v)x-t}\partial_x Y\right)+\frac1{a(v)^2-1}\delta_{(t,x)=(0,0)}
\end{equation}
holds in $\mathcal{D}'(\R^2)$.
\end{lemma}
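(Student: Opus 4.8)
The plan is to verify \eqref{eq:div1} directly, by first simplifying the product $\tfrac{x}{a(v)x-t}\,\partial_x Y$ to an explicit measure on the forward light cone and then differentiating it. Abbreviate $a=a(v)$, set $\psi(t,x):=\tfrac{x}{ax-t}$ and $Z:=\psi\,\partial_x Y$. By \eqref{eq:Y-deriv} we have $\partial_x Y=\tfrac12\delta_{x=-t}-\tfrac12\delta_{x=t}$, a measure carried by the two rays $\{x=\pm t,\ t\ge0\}$. The coefficient $\psi$ is smooth off the line $\{ax=t\}$, and since $|a|<1$ this line meets the light cone $\{x=\pm t\}$ only at the origin; hence $\psi$ restricted to $\supp(\partial_x Y)$ is bounded and locally constant away from the single point $(0,0)$, which the measure $\partial_x Y$ does not charge. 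Therefore $Z$ is a well-defined measure, and evaluating $\psi$ along each ray ($\psi=\tfrac1{a+1}$ on $\{x=-t,\ t>0\}$ and $\psi=\tfrac1{a-1}$ on $\{x=t,\ t>0\}$) gives
\[
Z=\frac{1}{2(a+1)}\,\delta_{x=-t}\ONE_{t>0}-\frac{1}{2(a-1)}\,\delta_{x=t}\ONE_{t>0}.
\]

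Next I would compute the action of the transport operator $\partial_t+a\partial_x$ on the two ray measures. Pairing with $\varphi\in\mathcal{D}(\R^2)$ and using $\tfrac{d}{dt}\varphi(t,\pm t)=(\partial_t\varphi)(t,\pm t)\pm(\partial_x\varphi)(t,\pm t)$, the fundamental theorem of calculus contributes a single boundary term at $t=0$, namely $\delta_{(t,x)=(0,0)}$, plus a multiple of $\partial_x$ of the same ray measure:
\begin{align*}
(\partial_t+a\partial_x)\bigl(\delta_{x=t}\ONE_{t>0}\bigr)&=(a-1)\,\partial_x\bigl(\delta_{x=t}\ONE_{t>0}\bigr)+\delta_{(t,x)=(0,0)},\\
(\partial_t+a\partial_x)\bigl(\delta_{x=-t}\ONE_{t>0}\bigr)&=(a+1)\,\partial_x\bigl(\delta_{x=-t}\ONE_{t>0}\bigr)+\delta_{(t,x)=(0,0)}.
\end{align*}
(Equivalently, one may write the rays as $\delta(x\mp t)H(t)$, differentiate formally, and check the outcome against test functions.)

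Finally I would combine these linearly. Applying $\partial_t+a\partial_x$ to the formula for $Z$, the prefactors $\tfrac1{2(a+1)}$ and $\tfrac1{2(a-1)}$ absorb the factors $a+1$ and $a-1$ multiplying the $\partial_x$ terms, so those recombine into $\partial_x\bigl(\tfrac12\delta_{x=-t}\ONE_{t>0}-\tfrac12\delta_{x=t}\ONE_{t>0}\bigr)=\partial_x^2 Y$, while the two copies of $\delta_{(t,x)=(0,0)}$ combine with coefficient $\tfrac1{2(a+1)}-\tfrac1{2(a-1)}=-\tfrac1{a^2-1}$. This gives $(\partial_t+a\partial_x)Z=\partial_x^2 Y-\tfrac1{a^2-1}\delta_{(t,x)=(0,0)}$, which rearranges to \eqref{eq:div1}. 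The one genuinely delicate point, and the reason the naive Leibniz expansion $(\partial_t+a\partial_x)(\psi\,\partial_x Y)=[(\partial_t+a\partial_x)\psi]\,\partial_x Y+\psi\,(\partial_t+a\partial_x)\partial_x Y$ must be avoided, is that $(\partial_t+a\partial_x)\psi=\tfrac{x-at}{(ax-t)^2}$ restricted to the light cone carries a non-integrable $1/t$ singularity at the origin, so the two terms of that product rule are not separately well-defined distributions; differentiating the already-simplified measure $Z$ sidesteps this, letting the origin contribution appear honestly as the boundary term in the integration by parts.
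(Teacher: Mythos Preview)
Your proof is correct and follows essentially the same route as the paper's. Both arguments first identify $\psi\,\partial_x Y$ with the explicit measure $\tfrac{1}{2(a+1)}\delta_{x=-t}\ONE_{t>0}-\tfrac{1}{2(a-1)}\delta_{x=t}\ONE_{t>0}$ and then apply $\partial_t+a\partial_x$ via integration by parts along the two rays, picking up the boundary term $\delta_{(0,0)}$ at $t=0$; the paper phrases this last step through the decomposition $T=\partial_t\pm\partial_x+(a\mp1)\partial_x$ acting on test functions, which is exactly your ray-by-ray computation in dual form.
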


\begin{proof}
Denoting $m(t,x)=\frac{x}{ax-t}$ and $T=\partial_t+a\partial_x$ the identity \eqref{eq:div1} which we seek to prove can be rewritten as
	\begin{equation}\label{eq:div2}
	T\left(m\partial_xY\right)
	=
	-\frac1{a(v)^2-1}\delta_{(t,x)=(0,0)}+\partial_x^2Y.
	\end{equation}
Recalling that $\partial_x Y(t,x)=\frac12\delta_{x=-t}-\frac12\delta_{x=t}$, one needs to be clear about the meaning of the left side of \eqref{eq:div2}. Having  the terms $\delta_{x=-t}$ and $\delta_{x=t}$, combined  with the restriction $|a|\neq1$, means that $m\partial_xY\in\mathcal{D}'(\R^2\setminus\{(0,0)\})$ is a well-defined distribution which is homogeneous of order $-1$. It admits a unique extension as a homogeneous distribution of order $-1$  in $\mathcal{D}'(\R^2)$ which we still denote $m\partial_xY$. Working with test functions, we observe that for every $\phi\in\mathcal{D}(\R^2)$
	\begin{equation*}
	\left<m\partial_xY,\phi\right>
	=
	\frac12\frac{1}{a+1}\int_0^\infty\phi(t,-t)\,dt
	-\frac12\frac{1}{a-1}\int_0^\infty\phi(t,t)\,dt.
	\end{equation*}
We can now compute $\left<T\left(m\partial_xY\right),\phi\right>$ by integrating by parts, and using the observation that
	\begin{equation*}
	T=\partial_t+a\partial_x=\partial_t+\partial_x+(a-1)\partial_x = \partial_t-\partial_x+(a+1)\partial_x.
	\end{equation*}
Specifically, we obtain
	\begin{align*}
	\left<T\left(m\partial_xY\right),\phi\right>
	&=	
	\frac12\frac{1}{a+1}\int_0^\infty(-T\phi)(t,-t)\,dt
	-\frac12\frac{1}{a-1}\int_0^\infty(-T\phi)(t,t)\,dt\\
	&=
	-\frac{1}{a^2-1}\phi(0,0)+\frac12\int_0^\infty\left(\partial_x\phi(t,t)-\partial_x\phi(t,-t)\right)dt.
	\end{align*}
The proof is complete by observing  that the  last term is precisely $\left<\partial_x^2Y,\phi\right>$. Indeed,
	\begin{equation*}
	\left<\partial_x^2Y,\phi\right>
	=
	\frac12\int_0^\infty\int_{-t}^t\partial_x^2\phi(t,x)\,dx\,dt
	=
	\frac12\int_0^\infty\left(\partial_x\phi(t,t)-\partial_x\phi(t,-t)\right)dt.
	\end{equation*}

\end{proof}

\bibliography{library}

\begin{thebibliography}{BGP03}

\bibitem[BGP03]{Bouchut2003}
Fran{\c{c}}ois Bouchut, Fran{\c{c}}ois Golse, and Christophe Pallard.
\newblock {Classical Solutions and the Glassey-Strauss Theorem for the 3D
  Vlasov-Maxwell System}.
\newblock {\em Archive for Rational Mechanics and Analysis}, 170(1):1--15, nov
  2003.

\bibitem[GP10]{Gerard2010}
Patrick G{\'{e}}rard and Christophe Pallard.
\newblock {A mean-field toy model for resonant transport}.
\newblock {\em Kinetic and Related Models}, 3(2):299--309, may 2010.

\bibitem[GS86]{Glassey1986}
Robert~T. Glassey and Walter~A. Strauss.
\newblock {Singularity formation in a collisionless plasma could occur only at
  high velocities}.
\newblock {\em Archive for Rational Mechanics and Analysis}, 92(1):59--90,
  1986.

\bibitem[GS87]{Glassey1987a}
Robert~T. Glassey and Walter~A. Strauss.
\newblock {Absence of Shocks in an Initially Dilute Collisionless Plasma}.
\newblock {\em Communications in mathematical physics}, 113(2):191--208, 1987.

\bibitem[GS88]{Glassey1988}
Robert~T. Glassey and Jack~W. Schaeffer.
\newblock {Global existence for the relativistic Vlasov-Maxwell system with
  nearly neutral initial data}.
\newblock {\em Communications In Mathematical Physics}, 119(3):353--384, sep
  1988.

\bibitem[GS90]{Glassey1990}
Robert~T. Glassey and Jack~W. Schaeffer.
\newblock {On the `one and one-half dimensional' relativistic Vlasov-Maxwell
  system}.
\newblock {\em Mathematical Methods in the Applied Sciences}, 13(2):169--179,
  aug 1990.

\bibitem[GS97]{Glassey1997}
Robert~T. Glassey and Jack~W. Schaeffer.
\newblock {The ``Two and One-Half Dimensional" Relativistic Vlasov Maxwell
  System}.
\newblock {\em Communications in mathematical physics}, 185(2):257--284, 1997.

\bibitem[KS02]{Klainerman2002}
Sergiu Klainerman and Gigliola Staffilani.
\newblock {A new approach to study the Vlasov-Maxwell system}.
\newblock {\em Communications on Pure and Applied Analysis}, 1(1):103--125,
  2002.

\bibitem[LS14]{Luk2014}
Jonathan Luk and Robert~M. Strain.
\newblock {A new continuation criterion for the relativistic Vlasov-Maxwell
  system}.
\newblock {\em Communications in mathematical physics}, 331(3):1005--1027, jun
  2014.

\bibitem[NP14]{Nguyen2014c}
Charles Nguyen and Stephen Pankavich.
\newblock {A one-dimensional kinetic model of plasma dynamics with a transport
  field}.
\newblock {\em Evolution Equations and Control Theory}, 3(4):681--698, oct
  2014.

\bibitem[Pfa92]{Pfaffelmoser1992}
K~Pfaffelmoser.
\newblock {Global classical solutions of the Vlasov-Poisson system in three
  dimensions for general initial data}.
\newblock {\em Journal of Differential Equations}, 95(2):281--303, feb 1992.

\bibitem[Sch86]{Schaeffer1986}
Jack~W. Schaeffer.
\newblock {The Classical Limit of the Relativistic Vlasov-Maxwell System}.
\newblock {\em Communications in mathematical physics}, 104(3):403--421, 1986.

\bibitem[Sch91]{Schaeffer1991}
Jack~W. Schaeffer.
\newblock {Global existence of smooth solutions to the vlasov poisson system in
  three dimensions}.
\newblock {\em Communications in Partial Differential Equations},
  16(8-9):1313--1335, jan 1991.

\bibitem[Wol84]{Wollman1984}
Stephen Wollman.
\newblock {An existence and uniqueness theorem for the Vlasov-Maxwell system}.
\newblock {\em Communications on Pure and Applied Mathematics}, 37(4):457--462,
  jul 1984.

\end{thebibliography}
\bibliographystyle{alpha}
\end{document}